\def\gcd{\operatorname{gcd}}
\def\ZZ{\mathbb{Z}}
\def\aa{\underline{a}}
\def\jj{\underline{j}}
\newtheorem{lemma}{Lemma}[section]
\newtheorem{corollary}[lemma]{Corollary}
\newtheorem{theorem}[lemma]{Theorem}
\newtheorem{proposition}[lemma]{Proposition}
\newtheorem{example}[lemma]{Example}
\def\phi{\varphi}
\def\A{\mathbb A}
\def\mm{{\bf m}}
\begin{document}
\title{Periodic Occurence of Complete Intersection Monomial Curves}
\author{A. V. Jayanthan}
\address{Department of Mathematics, Indian Institute of Technology Madras, Chennai, India -- 600036.}
\email{jayanav@iitm.ac.in}
\author{Hema Srinivasan}
\address{Department of Mathematics, University of Missouri-Columbia, Columbia, MO, USA -- 65211.}
\email{srinivasanh@missouri.edu}
\begin{abstract}
  We study the complete intersection property of monomial curves in
  the family $\Gamma_{\aa + \jj} = \{(t^{a_0 + j}, t^{a_1+j}, \ldots,
  t^{a_n + j}) ~ | ~ j \geq 0, ~ a_0 < a_1 < \cdots < a_n \}$. We prove
  that if $\Gamma_{\aa+\jj}$ is a complete intersection for $j \gg0$,
  then $\Gamma_{\aa+\jj+\underline{a_n}}$ is a complete intersection
  for $j \gg 0$.  This proves a conjecture of Herzog and Srinivasan on eventual periodicity of Betti numbers of semigroup rings under translations for complete intersections. 
   We also show that if $\Gamma_{\aa+\jj}$ is a complete
  intersection for $j \gg 0$, then $\Gamma_{\aa}$ is a complete
  intersection. We also characterize the complete intersection
  property of this family when $n = 3$.        \end{abstract}
\maketitle
\section*{Introduction}
Given an ascending  sequence of positive integers $e_0,\ldots, e_n$
the curve in $\A^{n+1}$ parameterized by $t\to (t^{e_0}, t^{e_1}\ldots
,t^{e_n})$  is called an affine monomial curve since the
parametrization is by monomials.    The minimal number of equations
defining   monomial curves and the various structures of monomial
curves have been fascinating algebraists and  geometors for a long
time.  It is well known that these equations are binomial equations.
In fact,  the ideal of a monomial curve in $\A^{n+1}$ is a weighted
homogeneous binomial prime ideal of height $n$ in the polynomial ring
$R= k[x_0, \ldots x_n]$ . In the plane, they are principal ideals and the
space monomial curves are either complete intersections, generated by
two binomials or determinantal ideals generated by the $2\times 2$
minors of a $2\times 3$ matrix \cite{herzog}.   This breaks down even
in dimension 4 because there is no upper bound for the number of
generators for monomial cuves in $\A^4$ \cite{BH}.  However,  because
of the structure theorem of Gorenstein ideals in Codimension three as
ideals generated by Pfaffians \cite{BH}, the Gorenstein monomial
curves in dimension three are either complete intersections, generated
by 3  elements or the ideal of $4\times 4$ pfaffians of a $5\times 5$
skew symmetric matrix.   Thus, for special classes of monomial curves,
the number of generators are bounded.    We partition the monomial curves
in to classes so that two monomial curves are in the same class if their
consecutive parameters have the same differences.   That is, if 
$\mm = \{m_1,\ldots, m_n\}$ is a  sequence of positive integers,
$C(\mm)$ is a class of monomial curves  defined by ${\aa} = a_0,
\ldots, a_n$ with $\Delta ({\aa}) = \mm$.    Herzog and Srinivasan
conjecture that the minimal number of generators for the ideal
defining the monomial curves in a given class $C(\mm)$ is bounded.  In
fact, they conjecture that this is eventually periodic with period
$\sum_i  m_i$. This conjecture is true for monomial curves defined
by arithmetic sequences \cite{GSS2}.  In this paper we prove the
conjecture in dimension 3 completely and prove it for complete
intersections in any dimension.  We prove that for $a_0\gg 0$, the
complete intersections in the class $C(\mm)$ occur periodically with
period $\sum_i m_i$.  Our proof of the conjecture follows from a
criterion for complete intersection extending the one in \cite{D} for monomial curves with high $a_0$. This generalizes and recovers some results of Adriano
Marzullo, \cite{Ad}.
 
Now we state the conjecture precisely:  Let $\aa= (a_0, \ldots
a_n)$ be a  sequence of positive integers and let $j$ be any positive
integer. Let $\aa + (\jj)$ denotes sequence
$(j+a_0, j+a_1, j+a_1, \ldots, j+a_n)$. Let $\Gamma_{\aa + (\jj)}$
denote the monomial curve corresponding to the sequence $\aa + (\jj)$
and $I_{\aa+(\jj)}$ denote the defining ideal of $\Gamma_{\aa+(\jj)}$.
Then the strong form of
Herzog-Srinivasan  conjecture states that the Betti numbers of $I_{\aa +
(\jj)}$ are eventually periodic in $j$.    Thus, the conjecture says
that within a class of monomial curves associated to  increasing
sequences $\aa $ with the same $\Delta {\aa}$, the Betti numbers of
the defining ideals are eventually periodic in $a_0$.  

In this paper we prove that for any sequence $\aa$, for large $j$,  if
$\aa + (\jj)$ is a complete intersection, then, $\aa +
(\underline{j+a_n})$ is a complete intersection. Since we are proving
results for $j \gg 0$, we may as well assume that $a_0 = 0$ and the
sequence $\aa + (\jj) = (j, j+a_1, \ldots, j+a_n)$.  To be precise,
let $CI(\aa)=\{j~ |~ \Gamma_{\aa+(\jj)} \mbox{ is a complete 
intersection curve}\}$.  We prove:

\vskip 2mm
\noindent
{\bf Theorem \ref{main}} {\em If $\aa = (a_1, \ldots, a_n)$,
then $CI(\aa)$ is either finite or eventually periodic with period
$a_n$. If for $j \gg 0$, $\aa+(\jj)$ is a complete intersection, then
there exists $1 \leq t \leq n-1$ and $k \in \ZZ_+$ such that 
\begin{enumerate}
  \item[(a)] $j = a_nm$ for some $m \in \ZZ_+$, 
  \item[(b)] $\gcd(a_1, \ldots, a_{t-1}, a_t+a_{t+1}, a_{t+2}, \ldots,
	a_n) = k \neq 1$ and
  \item[(c)] $\displaystyle{ a_t \in \left \langle \frac{a_1}{k},
	\ldots, \frac{a_{t-1}}{k}, \frac{a_{t+1}}{k}, \ldots,
	\frac{a_n}{k} \right\rangle}$.
\end{enumerate}}
\vskip 2mm
\noindent
and 
\vskip 2mm
\noindent
{\bf Corollary \ref{cor1}}
{\em For $j \gg 0$, if $\aa + (\jj)$ is a complete intersection, then $\aa$
is a complete intersection. }

\vskip 2mm
\noindent
We also give a criterion for these curves to be a complete 
intersections in Theorem \ref{crit}. 

\section{Monomial curves in $\mathbb{A}^3$}
Let $\aa = (a_0,a_1, \ldots, a_n) \in \ZZ_+^{n+1}$ with $a_0< a_1 < \cdots <
a_n$ and $R = k[t^{a_0}, \ldots, t^{a_n}]$, where $k$ is a field of
characteristic zero. We say that $\aa$ is a complete intersection
sequence if $R$ is a complete intersection. 
For the reason explained in the introduction,
we will assume here that $a_0 = 0$. 
We begin by recalling a result characterizing the
complete intersection property of the sequence $\aa$. For any sequence
$\aa$, let $\langle a_1, \ldots, a_n \rangle := \{\sum_{i=1}^n r_ia_i
~ | ~ r_i \in \ZZ_{\geq 0} \}$ be the semigroup generated by $a_1, \ldots,
a_n$.

\begin{theorem}\label{complete-int}{\em [Proposition 9, \cite{D}]}
The sequence $\aa$ is a complete intersection if and only if $\aa$ can
be written as a disjoint union:
$$
\aa = k_1(b_{i_1}, \ldots, b_{i_r}) \sqcup k_2(b_{i_{r+1}}, \ldots,
b_{i_n}),
$$
where $a_{i_m} = k_1b_{i_m}$ for $m = 1, \ldots, r$, $a_{i_m} = k_2
b_{i_m}$ for $m = r+1, \ldots, n$,
$\gcd(k_1, k_2) = 1$, $k_1 \notin \{b_{i_{r+1}}, \ldots, b_{i_n}\}$,
$k_1 \in \langle b_{i_{r+1}}, \ldots, b_{i_n} \rangle$, $k_2 \notin
\{b_{i_1}, \ldots, b_{i_r}\}$, $k_2 \in \langle \{b_{i_1}, \ldots,
b_{i_r} \rangle$ and both $(b_{i_1}, \ldots, b_{i_r})$ and
$(b_{i_{r+1}}, \ldots, b_{i_n})$ are complete intersection sequences.
\end{theorem}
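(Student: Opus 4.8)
The plan is to prove both implications by induction on $n$, using the theory of \emph{gluing} of numerical semigroups. Writing $S = \langle a_1, \ldots, a_n \rangle$ (recall $a_0 = 0$), the defining ideal $I$ is the toric ideal of $S$, a prime binomial ideal of height $n-1$ in $k[x_1, \ldots, x_n]$, so $\aa$ is a complete intersection precisely when $I$ is minimally generated by $n-1$ binomials. The decomposition asserted in the statement is exactly the claim that $S$ is a gluing of the two smaller numerical semigroups $S_1 = \langle b_{i_1}, \ldots, b_{i_r} \rangle$ and $S_2 = \langle b_{i_{r+1}}, \ldots, b_{i_n} \rangle$, rescaled by $k_1$ and $k_2$; this suggests running the whole argument by induction on the number of generators.

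For the backward direction, assume the decomposition holds. From $k_2 \in \langle b_{i_1}, \ldots, b_{i_r} \rangle$ write $k_2 = \sum_{m \leq r} d_m b_{i_m}$, and from $k_1 \in \langle b_{i_{r+1}}, \ldots, b_{i_n} \rangle$ write $k_1 = \sum_{m > r} c_m b_{i_m}$. Multiplying the first by $k_1$ and the second by $k_2$ exhibits the element $k_1 k_2 \in S$ through the two expressions $\sum_{m \leq r} d_m a_{i_m} = \sum_{m > r} c_m a_{i_m}$, which produces a single bridging binomial $\rho = \prod_{m \leq r} x_{i_m}^{d_m} - \prod_{m > r} x_{i_m}^{c_m}$. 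First I would check, using $\gcd(k_1, k_2) = 1$ together with the membership and non-membership hypotheses, that $I$ equals the sum of the two block toric ideals together with $(\rho)$, and that $\rho$ is a nonzerodivisor modulo that sum. Since the inductive hypothesis makes $S_1$ and $S_2$ complete intersections defined by $r-1$ and $n-r-1$ binomials respectively, $I$ is then generated by $(r-1) + (n-r-1) + 1 = n-1$ elements forming a regular sequence, so $\aa$ is a complete intersection.

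The forward direction is the main obstacle. Assuming $\aa$ is a complete intersection, I would work with the defining lattice $L = \{ u \in \ZZ^n : \sum_i u_i a_i = 0 \}$ of rank $n-1$, and use that $I = I_L$ being a complete intersection forces $L$ to have a basis $v_1, \ldots, v_{n-1}$ whose associated binomials minimally generate $I$. The crux is to choose this basis \emph{reducibly}: to find a partition $\{1, \ldots, n\} = A \sqcup B$ into nonempty blocks and one distinguished generator $v_\ell$ whose support meets both $A$ and $B$, while every other generator is supported entirely within $A$ or entirely within $B$. I would extract this from a connectivity and rank argument on the incidence graph of the generators, exploiting the primality of $I_L$ and the fact that exactly $n-1$ relations leave no room for a second bridging relation. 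Once the partition is found, set $k_1 = \gcd\{a_i : i \in A\}$, $k_2 = \gcd\{a_i : i \in B\}$, and $b_i = a_i/k_1$ on $A$, $b_i = a_i/k_2$ on $B$; coprimality $\gcd(k_1, k_2) = 1$ then follows from $\gcd(a_1, \ldots, a_n) = 1$ and the form of $v_\ell$, which equates a $k_1$-multiple with a $k_2$-multiple and thereby also delivers the two membership conditions $k_1 \in \langle b_{i_{r+1}}, \ldots, b_{i_n} \rangle$ and $k_2 \in \langle b_{i_1}, \ldots, b_{i_r} \rangle$. The two restricted semigroups are complete intersections by induction. Verifying that the distinguished relation genuinely \emph{disconnects} the generators, rather than merely thinning them out, is where the real difficulty lies.
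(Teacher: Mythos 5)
First, a point of comparison: the paper does not prove this theorem at all --- it is quoted, with attribution, as Proposition 9 of Delorme \cite{D} and used as a black box throughout, so what you are attempting is a proof of Delorme's theorem itself. Within your attempt, the backward implication is a sound outline of the standard gluing argument: the two representations of $k_1k_2$ yield the bridging binomial $\rho$; the ideal $I_{S_1}+I_{S_2}$ defines $k[S_1]\otimes_k k[S_2]\cong k[S_1\times S_2]$, a domain in which the image of $\rho$ is nonzero, so $\rho$ is a nonzerodivisor modulo it; and the generator count $(r-1)+(n-r-1)+1=n-1$ is correct. The identity $I=I_{S_1}+I_{S_2}+(\rho)$ still needs an honest verification (this is where $\gcd(k_1,k_2)=1$ and the membership and non-membership hypotheses enter), but that part is routine and standard.

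The genuine gap is the forward implication, and you concede it yourself in your final sentence. The existence of a partition $\{1,\dots,n\}=A\sqcup B$ together with a minimal system of $n-1$ binomial generators in which \emph{exactly one} generator has support meeting both blocks while every other generator is supported inside a single block is not a lemma on the way to the theorem --- it essentially \emph{is} the theorem. Your proposed ``connectivity and rank argument on the incidence graph of the generators'' addresses the wrong difficulty: the issue is not to rule out a second bridging generator (a counting matter), but to prove that \emph{some} minimal binomial generating set admits a disconnecting generator at all. An arbitrary minimal generating set of a complete intersection toric ideal need not visibly exhibit one, so one must be prepared to replace generators and argue by induction on $n$; this is precisely the content of Delorme's proof (and of the later gluing and mixed-dominating-matrix treatments), and it requires ideas beyond primality plus graph connectivity. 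Until that step is supplied, $k_1$, $k_2$, the $b_i$, their coprimality, and the membership conditions are never constructed, so the hard half of the equivalence remains unproven.
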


We say that the sequence $\aa$ is a complete intersection of type $(r,
n-r)$ if it splits as in the above theorem. 

\begin{lemma}\label{split-type1}
Suppose $j > a_n^2$. If $(j, j+a_1, j+a_2, \ldots, j+a_n)$ is a
complete intersection of the type $(m, n+1-m)$, then either $m = 1$ or
$m = n$.
\end{lemma}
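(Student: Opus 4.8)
The plan is to feed the assumed type $(m,n+1-m)$ splitting into the structural description of Theorem~\ref{complete-int} and show that, once $j>a_n^2$, the numerical-semigroup membership conditions force one of the two blocks of the splitting to be a singleton. Write $g_i = j+a_i$ for $0\le i\le n$ (so $g_0 = j$), and suppose $(g_0,\ldots,g_n)$ is a complete intersection of type $(m,n+1-m)$. By Theorem~\ref{complete-int} there is a partition of $\{0,1,\ldots,n\}$ into blocks $S$ with $|S|=m$ and $T$ with $|T|=n+1-m$, together with coprime positive integers $k_1,k_2$, so that $k_1\mid g_i$ for $i\in S$ and $k_2\mid g_i$ for $i\in T$, and so that the reduced generators satisfy $k_1\in\langle g_i/k_2 : i\in T\rangle$ and $k_2\in\langle g_i/k_1 : i\in S\rangle$. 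Proving the lemma amounts to ruling out $2\le m\le n-1$, so I would assume for contradiction that $|S|\ge 2$ and $|T|\ge 2$.

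First I would bound the divisor factors. If $i,i'\in S$ are distinct, then $k_1$ divides $g_i-g_{i'}=a_i-a_{i'}$, a nonzero integer of absolute value at most $a_n$ (since $0=a_0\le a_i\le a_n$); hence $k_1\le a_n$. The identical argument applied to two distinct indices of $T$ gives $k_2\le a_n$. This is exactly the step that consumes the assumption that \emph{both} blocks have at least two elements: a singleton block carries no difference to divide, so no such bound is available for it.

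The contradiction then comes from the size hypothesis $j>a_n^2$. For each $i\in T$ the reduced generator satisfies $g_i/k_2=(j+a_i)/k_2\ge j/k_2\ge j/a_n>a_n\ge k_1$, using $k_2\le a_n$. Thus every generator of the semigroup $\langle g_i/k_2 : i\in T\rangle$ is strictly larger than the positive integer $k_1$, so $k_1$ cannot possibly lie in that semigroup, contradicting the membership condition $k_1\in\langle g_i/k_2 : i\in T\rangle$ supplied by Theorem~\ref{complete-int}. Hence the assumption $|S|,|T|\ge 2$ is untenable, and one of the blocks is a singleton, i.e.\ $m=1$ or $m=n$.

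The only delicate point, and the one I would state most carefully, is the interplay between the two kinds of hypotheses. The divisibility of consecutive differences keeps $k_1$ (and $k_2$) bounded by $a_n$, a quantity independent of $j$; meanwhile the semigroup membership condition concerns the \emph{reduced} generators $g_i/k_2$, which grow linearly in $j$. The bound $j>a_n^2$ is precisely what is needed to make every $g_i/k_2$ with $i\in T$ overtake $k_1$ simultaneously. I expect no genuine obstacle beyond bookkeeping: the argument degenerates exactly when a block is a singleton, which is why the authentic type $(1,n)$ and $(n,1)$ complete intersections survive and the lemma claims nothing more.
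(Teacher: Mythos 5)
Your proof is correct and follows essentially the same route as the paper's: bound $k_1,k_2\le a_n$ using divisibility of differences within each block (possible only when both blocks have at least two elements), then use $j>a_n^2$ to force every reduced generator above $a_n\ge k_1$, contradicting the semigroup membership $k_1\in\langle g_i/k_2 : i\in T\rangle$. In fact your write-up makes explicit the difference-divisibility step that the paper states only tersely, so nothing further is needed.
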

\begin{proof}
Suppose $1< m < n$, then we have a split of the form
$$
(j, j+a_1, \ldots, j+a_n) = k_1(\alpha_1, \ldots, \alpha_m)
\sqcup k_2(\alpha_{m+1}, \ldots, \alpha_{n+1}),
$$
where $k_1 \in \langle \alpha_{m+1}, \ldots, \alpha_{n+1} \rangle$ and
$k_2 \in \langle \alpha_1, \ldots, \alpha_m \rangle$.
Since $k_i$ divides $j+a_{l}$ for $1 < l < n$, $k_i \leq
a_n$. Since $j > a_n^2$ and $k_2 \leq
a_n$, $\alpha_j > a_n$. This contradicts the
fact that $k_1 \in \langle \alpha_{m+1}, \ldots, \alpha_{n+1}
\rangle$. Therefore $m = 1$ or $m = n$.
\end{proof}

\begin{lemma}\label{split-type2}
Suppose $\aa + (\jj)$ is a complete intersection sequence for $j \gg 0$.
Then complete intersection splits of the type
\begin{enumerate}
  \item $\aa+(\jj) = k_1\left(\frac{j}{k_1}\right) \sqcup
	k_2\left(\frac{j+a_1}{k_2}, \ldots,
	\frac{j+a_n}{k_2}\right)$;
  \item $\aa+(\jj) = k_1\left(\frac{j}{k_1}, \frac{j+a_1}{k_1}, \ldots,
	\frac{j+a_{n-1}}{k_1}\right) \sqcup
	k_2\left(\frac{j+a_n}{k_2}\right)$
\end{enumerate}
are not possible.
\end{lemma}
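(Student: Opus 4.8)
The plan is to feed each of the two proposed splits into the structural characterization of Theorem~\ref{complete-int} and to extract from the resulting numerical conditions an upper bound on $j$ depending only on $\aa$; since we are in the regime $j \gg 0$ (in particular $j > a_n^2$, the threshold already used in Lemma~\ref{split-type1}), this is the desired contradiction. In both splits one of the two parts is a singleton, so the two semigroup-membership conditions of Theorem~\ref{complete-int} read: the multiplier attached to the singleton is a \emph{proper} multiple of that singleton, and the multiplier attached to the larger part is a nonnegative integer combination of its generators that is \emph{not} itself one of them. The mechanism of the proof is to compute $k_1 k_2$ in two ways and compare.

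For split (1), $k_1 \mid j$ while $k_2 \mid (j+a_i)$ for every $i$, so $k_2$ divides $a_2 - a_1$ and hence $k_2 \le a_n$. The condition $k_2 \in \langle j/k_1\rangle$ with $k_2 \ne j/k_1$ gives $k_2 = c\,(j/k_1)$ with $c \ge 2$, and $k_1 \in \langle (j+a_1)/k_2, \ldots, (j+a_n)/k_2\rangle$ with $k_1$ equal to no generator lets us write $k_1 = \sum_i r_i (j+a_i)/k_2$ with $s := \sum_i r_i \ge 2$. Then $k_1 k_2 = cj$ and also $k_1 k_2 = sj + \sum_i r_i a_i$, so $(c-s)\,j = \sum_i r_i a_i > 0$, whence $c \ge s+1$. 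Since $k_1 \ge s\,j/k_2 \ge s\,j/a_n$ and $k_1 \le j$, we get $s \le a_n$, and therefore $j \le (c-s)\,j = \sum_i r_i a_i \le s\,a_n \le a_n^2$, contradicting $j > a_n^2$.

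Split (2) is treated the same way, with one sign subtlety. Here $k_1$ divides $j$ and each $j+a_i$ for $i \le n-1$, so $k_1 \mid a_i$ and $k_1 \le a_1$, while the singleton is $j + a_n$. Writing $k_1 = c'(j+a_n)/k_2$ with $c' \ge 2$ and $k_2 = r_0(j/k_1) + \sum_{i=1}^{n-1} r_i (j+a_i)/k_1$ with $s := r_0 + \sum_i r_i \ge 2$, the two evaluations of $k_1 k_2$ yield $(c'-s)\,j = \sum_i r_i a_i - c' a_n$. Because $k_2 \ge s\,(j/k_1) \ge s\,j/a_1$ while $k_2 \mid (j+a_n)$ forces $k_2 \le j+a_n$, one finds $s \le a_1$ once $j > a_1 a_n$; and comparing $c'(j+a_n) = sj + \sum_i r_i a_i \le s(j + a_{n-1})$ shows $c' < s$. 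Hence $(c'-s)\,j \le -j$, whereas $(c'-s)\,j = \sum_i r_i a_i - c'a_n \ge -c'a_n > -a_1 a_n$, forcing $j < a_1 a_n < a_n^2$, again a contradiction.

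The step I expect to demand the most care is bounding the number of summands $s$ and the multiplier $c$ (or $c'$) by constants independent of $j$, since this is exactly what turns the two expressions for $k_1k_2$ into an honest ceiling on $j$. For split (1) this is painless because $k_2 \le a_n$. For split (2) the divisor $k_2$ need no longer be at most $a_n$ --- only a single parameter is divisible by it --- so one must instead use $k_2 \mid (j+a_n)$ together with the fact that the smallest generator $j/k_1$ is large, and then track the sign of $c'-s$ rather than of $c-s$. Once these bounds are secured, the contradiction with $j \gg 0$ is automatic in both cases.
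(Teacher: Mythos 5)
Your proof is correct and follows essentially the same route as the paper: both arguments feed the two splits into Delorme's characterization (Theorem \ref{complete-int}), convert the cross semigroup-membership conditions on $k_1$ and $k_2$ into linear relations among $j$ and the $a_i$ whose coefficient sums are bounded independently of $j$, and contradict $j > a_n^2$. Your ``compute $k_1k_2$ two ways'' packaging is a slightly cleaner organization of the same computation (it replaces the paper's three-case analysis in split (2) by the single comparison $c' < s \leq a_1$, and treats the degenerate cases $k_1 = j$, $k_2 = j + a_n$ uniformly rather than setting them aside), but the underlying mechanism is identical.
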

\begin{proof}
First we prove that a split as in $(1)$ is not possible. Suppose $(1)$
is a complete intersection split of $\aa+(\jj)$. First note that $k_2$
divides $a_i$ for $i \geq 2$. If $k_1 \neq j$, then by multiplying by
an appropriate factor, we obtain
\begin{eqnarray*}
j & = & \alpha_1 \frac{j+a_1}{k_2} + \cdots + \alpha_n
\frac{j+a_n}{k_2} \\
& = & (\alpha_1 + \cdots + \alpha_n)\frac{j}{k_2} + \alpha_1
\frac{a_1}{k_2} + \alpha_2 \frac{a_2}{k_2} + \cdots + \alpha_n
\frac{a_n}{k_2},
\end{eqnarray*}
where $\alpha_i$'s are non-negative integers.
Therefore, 
$$k_2 j = (\sum_{i=1}^n\alpha_i)(j) + \alpha_1a_1 +
\cdots + \alpha_na_n$$ 
so that 
$$
[k_2 - (\alpha_1+\cdots+\alpha_n)]j  = \alpha_1a_1 +
\cdots + \alpha_na_n
$$
Since the right hand side consists of linear combination of
non-negative integers, not all of them zero, $k_2 > \sum_{i=1}^n
\alpha_i$. Therefore, we have 
\begin{eqnarray*}
 0 < [k_2 - (\alpha_1+\cdots+\alpha_n)]j & < &
  (\sum_{i=1}^n\alpha_i)(a_n) \leq  
  k_2 (a_n)  \leq a_n^2.
\end{eqnarray*}
This contradicts the fact that $j > a_n^2$. Therefore a
split of the first kind is not possible.

\vskip 2mm \noindent
Now assume that $(2)$ is a complete intersection split for $\aa+ (j)$,
for $j > a_n^2$. If
$k_2 \neq j+\sum_{i=1}^na_i$, then after multiplying with an
appropriate factor we get
$$
j+a_n = \alpha_1 \frac{j}{k_1} + \cdots + \alpha_n
\frac{j+a_{n-1}}{k_1},
$$
where $\alpha_i$'s are non-negative integers. Therefore

\begin{eqnarray}\label{eqn1}
a_n & = &
\left(\sum_{i=1}^n\alpha_i-k_2\right)\frac{j}{k_2} +
\alpha_2\frac{a_1}{k_2}+ \alpha_3 \frac{a_2}{k_2} + \cdots
+\frac{\alpha_n}{k_2} a_{n-1}.
\end{eqnarray}

If $\sum_{i=1}^n \alpha_i < k_2$, then it follows from the above
equality that
\begin{eqnarray*}
  a_n & \leq &
  \left(\sum_{i=1}^n\alpha_i-k_2\right)\frac{j}{k_2} + \frac{1}{k_2}
  a_n\left(\sum_{i=1}^n \alpha_i\right) \\
  & \leq & \left(\sum_{i=1}^n\alpha_i-k_2\right)\frac{j}{k_2} +
  a_n \leq 0.
\end{eqnarray*}
The last inequality holds since $k_2 \leq \sum_{i=1}^n a_i$ and $j >
a_n^2$. This is a contradiction, since the left hand side of the
inequality is a positive integer.

Now suppose $\sum_{i=1}^n \alpha_i > k_2$. It follows from equation
(\ref{eqn1}) that
\begin{eqnarray*}
  k_2a_n  & = & 
  \left(\sum_{i=1}^n\alpha_i-k_2\right)j + \alpha_2 a_1
  + \cdots + \alpha_na_{n-1}.
\end{eqnarray*}
Therefore $j \leq k_2 a_n < a_n^2$, which is a contradiction to the
hypothesis that $j > a_n^2$.

If $\sum_{i=1}^n \alpha_i = k_2$, then we have
\begin{eqnarray*}
  a_n & = & \alpha_2 a_1
  + \cdots + \alpha_na_{n-1}\\
  & < & \frac{1}{k_2} \left(\sum_{i=1}^n \alpha_i\right)
  a_{n-1} = a_{n-1}.
\end{eqnarray*}
This is again a contradiction. Therefore, 
all three possibilities lead to contradiction. Hence
a complete intersection split of type (2) is not possible.
\end{proof}

We now prove the periodicity conjecture for monomial curves in 
$\mathbb{A}^3$.
Let $a_1 = a$ and $a_2 = b$. We first prove a characterization for
$(j, j+a, j+b)$ to be a complete intersection sequence for $j \gg
0$.
\begin{theorem}\label{complete-int1}
If $j \geq \max\{ab, b(b-a)\}$, then $(j,j+a, j+b)$ defines a
complete intersection ideal if and only if there exist $(j, b) = k
\neq 1$ and non-negative integers $\alpha, \beta$ such that $k(j+a) =
\alpha (j)+\beta (j+b)$. Moreover, in this case, $\alpha +\beta =
k$ and $(a,b-a) = s$ with $b = sk$. 
In partiular, if $a$ and $b-a$ are relatively prime,  $(j,j+a,j+b)$
is a complete intersection if and only if $b$ divides $j$.
\end{theorem}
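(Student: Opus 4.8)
The plan is to deduce everything from Delorme's criterion (Theorem~\ref{complete-int}) together with the splitting restrictions in the spirit of Lemma~\ref{split-type2}, sharpened for three‑term sequences. Write $a=a_1$, $b=a_2$. A sequence $(j,j+a,j+b)$ has only three partitions into two nonempty blocks: $\{j\}\mid\{j+a,j+b\}$, $\{j+b\}\mid\{j,j+a\}$, and $\{j+a\}\mid\{j,j+b\}$. First I would rule out the first two. A complete intersection split into a singleton and a pair produces, besides the relation internal to the pair, a connecting relation $e\cdot(\text{singleton})=\gamma_1(\cdot)+\gamma_2(\cdot)$ in which $e$ is at most the common factor of the pair. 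For the pair $\{j+a,j+b\}$ that factor divides $\gcd(j+a,b-a)$, so $e\le b-a$, and $ej=\gamma_1(j+a)+\gamma_2(j+b)$ forces $\gamma_1a+\gamma_2b\le(e-1)b<b(b-a)\le j$, contradicting the fact that the same relation gives $\gamma_1a+\gamma_2b\ge j$. For the pair $\{j,j+a\}$ the factor divides $\gcd(j,a)$, so $e\le a$, and $e(j+b)=\gamma_1j+\gamma_2(j+a)$ leads to $b\le a$ or $j\le ab$, both impossible under the bound (the borderline $j=ab$ being harmless). Hence the only admissible split is $\{j,j+b\}\sqcup\{j+a\}$.

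For the forward implication I would analyze this split. The block $\{j,j+b\}$ has a common factor $k_1$ with $j=k_1b_1$, $j+b=k_1b_2$, $\gcd(b_1,b_2)=1$, so $k_1\mid\gcd(j,b)$; the singleton gives $j+a=k_2b_3$ with $b_3\mid k_1$, and Delorme's membership condition reads $k_2\in\langle b_1,b_2\rangle$. I would first exclude $k_1=1$: it would force $j+a\in\langle j,j+b\rangle$, impossible since the only element of that semigroup in the open interval $(j,2j)$ is $j+b\ne j+a$. Thus $k_1\ge2$ and so $k:=\gcd(j,b)\ne1$. Writing $k_2=pb_1+qb_2$ and clearing denominators gives $k_1(j+a)=(b_3p)\,j+(b_3q)(j+b)$; multiplying by the integer $\gcd(j,b)/k_1$ produces $k(j+a)=\alpha j+\beta(j+b)$ with $\alpha,\beta\ge0$, which is the asserted relation.

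For the converse I would exhibit the split $\{j,j+b\}\sqcup\{j+a\}$ directly: the pair $\{j,j+b\}$ has common factor $k=\gcd(j,b)$, giving coprime $b_1=j/k$, $b_2=(j+b)/k$, while the hypothesis $k(j+a)=\alpha j+\beta(j+b)$ rewrites as $j+a=\alpha b_1+\beta b_2$, which is precisely the membership of the singleton in $\langle b_1,b_2\rangle$ required by Theorem~\ref{complete-int}. The remaining conditions of that theorem ($\gcd(k,k_2)=1$, $k\ne1$, and $k_2\notin\{b_1,b_2\}$, the last holding because $j$ is large) are then routine to check, and both blocks are trivially complete intersections, so the sequence is a complete intersection.

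Finally, for the ``moreover'' clause I would argue arithmetically. Writing the relation as $(\alpha+\beta-k)\,j=ka-\beta b$ and using $k\le b$ together with $\beta<k$, a short estimate gives $|ka-\beta b|<j$ once $j>\max\{ab,b(b-a)\}$ (indeed $ka\le ab$ and $\beta b-ka\le k(b-a)-b<b(b-a)$), so the integer coefficient $\alpha+\beta-k$ must vanish; hence $\alpha+\beta=k$ and $\beta b=ka$. Setting $s=\gcd(a,b-a)$ and $b'=b/s$, $a'=a/s$ (so $\gcd(a',b')=1$), the equation $\beta b=ka$ becomes $\beta b'=ka'$ and forces $b'\mid k$. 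On the other hand $b'(j+a)=\frac{b-a}{s}\,j+a'(j+b)$ already lies in $\langle j,j+b\rangle$, so the connecting exponent, i.e.\ the least $e$ with $e(j+a)\in\langle j,j+b\rangle$, is at most $b'$; identifying this least exponent with $\gcd(j,b)=k$ then yields $k\le b'$ and hence $k=b'$, that is $b=sk$. The ``in particular'' statement is the case $s=1$, where $b=sk=k=\gcd(j,b)$ is equivalent to $b\mid j$. The step I expect to be the main obstacle is exactly this identification of $\gcd(j,b)$ with the least connecting exponent, so that the generator $y^{k}-x^{\alpha}z^{\beta}$ (under $x,y,z\mapsto t^{j},t^{j+a},t^{j+b}$) is minimal and $b=sk$ holds, together with the care needed at the boundary $j=\max\{ab,b(b-a)\}$, where the complete intersection split need not be unique.
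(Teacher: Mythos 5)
Your skeleton is the same as the paper's: you exclude the two splits in which $j$ or $j+b$ is the singleton (your estimates essentially reprove Lemma \ref{split-type2} for $n=2$, which is what the paper cites), analyze the remaining split $\{j,j+b\}\sqcup\{j+a\}$ via Theorem \ref{complete-int}, and get $\alpha+\beta=k$ by size estimates; your forward direction is sound, and in fact more attentive to the boundary case $j=ab$ than the paper is. The first genuine gap is in your converse. The condition $\gcd(k,k_2)=1$ is \emph{not} routine: since $k\mid j$, one has $\gcd(k,j+a)=\gcd(k,a)=\gcd(j,a,b)$, which can exceed $1$. For $(j,a,b)=(180,8,10)$ your hypothesis holds ($k=10$ and $10\cdot 188=2\cdot 180+8\cdot 190$), but $\gcd(10,188)=2$, so the split you exhibit violates Delorme's coprimality; the sequence is a complete intersection, but only via a different normalization, e.g. $(180,190)=5(36,38)$ with $188=36+4\cdot 38$. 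Worse, for $a=2$, $b=4$, $j=2m$ with $m\geq 5$ odd, the hypothesis holds ($k=2$ and $2(j+2)=j+(j+4)$), yet $(j,j+2,j+4)=2\,(m,m+1,m+2)$ is \emph{not} a complete intersection; so no argument can close this gap as stated --- the equivalence itself is false unless one assumes $\gcd(j,a,b)=1$. (The paper is no better here: it dismisses the converse as ``clear''.)

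The second gap is exactly the step you flagged, and it is a real failure, not just a difficulty: the least $e$ with $e(j+a)\in\langle j,j+b\rangle$ need not equal $\gcd(j,b)$. For $a=2$, $b=4$, $j=4m$ the least such exponent is $b'=2$ while $k=\gcd(j,b)=4$; here the hypothesis and the complete intersection property both hold (the split is $4(m,m+1)\sqcup (2m+1)(2)$), and indeed $b=4\neq sk=8$, so the ``moreover'' clause also requires $\gcd(j,a,b)=1$. Under that assumption the identification you wanted is unnecessary: from $\beta b'=ka'$ with $\gcd(a',b')=1$ you already have $b'\mid k$, and for the reverse divisibility one argues by valuations --- every prime $p$ dividing $k$ divides $j$ and $b$ but not $a$, hence $v_p(b')=v_p(b)\geq v_p(k)$, so $k\mid b'$ and $k=b'$. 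For comparison, the paper simply asserts $b=(\alpha+\beta)s$ at the corresponding point, so its proof of the ``moreover'' clause has the same defect as yours; note also that the final ``in particular'' statement is safe in both treatments, since $\gcd(a,b-a)=1$ forces $\gcd(j,a,b)=1$, which is the only regime in which the theorem, your converse, and the paper's are all correct.
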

\begin{proof}
Let $(j, j+a, j+b)$ be a complete intersection sequence. By Theorem
\ref{complete-int} and Lemma \ref{split-type2}, we can have only one
split possible, namely: 
\vskip 2mm \noindent
$$
(j, j+a, j+b) = \frac{j+a}{k'}(k') \sqcup k\left(\frac{j}{k},
\frac{j+b}{k}\right),
$$
where $k' \mid k,$ $\gcd\left(\frac{j+a}{k'}, k\right) = 1$ and
$\frac{j+a}{k'} \in \left\langle \frac{j}{k}, \frac{j+b}{k}
\right\rangle$. Let $\alpha, \beta$ be non-negative integers such that
$k(j+a) = \alpha j+\beta (j+b)$.  Since $k\leq b$, we see that
$k(j+a) \le kj+j = j(k+1)$.  Therefore $kj+ka = j(\alpha
+\beta)+\beta b$ so that $\alpha +\beta \le k$. If $\alpha+\beta <
k$, then the equation $ka = (\alpha+\beta - k) j + \beta b$ would
imply that $ka < 0$ if $j \gg 0$. Therefore, $\alpha+\beta = k$.

Further,  in this even, $\alpha a  = \beta (b-a)$.  Therefore $b = (\alpha
+\beta )s$ so that $\alpha(b) = (b-a)(\alpha+\beta) = \alpha s (\alpha
+\beta)$.  Hence $b-a =\alpha s$ and $a = \beta s$.  
 
If $\gcd(a,b-a)=1$, then $s =1$ and hence $\alpha +\beta = k = b$,
there by establishing that $b$ divides $j$.
 
 The converse is clear.  
\end{proof}
We new prove the periodicity conjecture for $n = 2$.
\begin{theorem}
Let $\aa + (\jj) = (j, j+a, j+b)$ and let $I_{\aa + (\jj)}$
denote the defining ideal of the monomial curve $(t^j, t^{j+a},
t^{j+b})$. If $j \gg 0$, then the betti numbers of $I_{\aa+(\jj)}$
are periodic for with period $b$.
\end{theorem}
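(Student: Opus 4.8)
The plan is to reduce the periodicity of the entire Betti sequence to the periodicity of a single Boolean quantity---whether the curve is a complete intersection---and then to read that periodicity off Theorem \ref{complete-int1}. The reduction rests on the structure theorem of Herzog for space monomial curves \cite{herzog}: the defining ideal $I_{\aa+(\jj)}$ of a curve in $\mathbb{A}^3$ is minimally generated by either two or three binomials, and these are the only possibilities. Since $I_{\aa+(\jj)}$ has height two, a two-generated ideal is automatically a complete intersection, resolved by the Koszul complex, so its Betti numbers are $1, 2, 1$. A three-generated ideal is, by the same structure theorem, the ideal of $2 \times 2$ minors of a $2 \times 3$ matrix, hence resolved by the Hilbert--Burch complex, so its Betti numbers are $1, 3, 2$. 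Thus the Betti numbers of $I_{\aa+(\jj)}$ take only these two values, and the entire Betti sequence is determined by whether or not $(j, j+a, j+b)$ is a complete intersection.

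First I would record this dichotomy carefully, checking in particular that the non--complete--intersection case always yields Betti numbers $1, 3, 2$ independently of $j$, so that the Betti sequence is genuinely a function of the single predicate $j \in CI(\aa)$. Once this is established, the theorem is equivalent to the assertion that, for $j \gg 0$, this predicate is periodic in $j$ with period $b$.

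Next I would invoke Theorem \ref{complete-int1}. For $j \geq \max\{ab, b(b-a)\}$ it characterizes the complete intersection property through the existence of $k = \gcd(j, b) \neq 1$ and non-negative integers $\alpha, \beta$ with $\alpha + \beta = k$ and $k(j+a) = \alpha j + \beta(j+b)$. Substituting $\alpha + \beta = k$ collapses this relation to $ka = \beta b$, so that the condition is simply that $b \mid ka$ together with $k \neq 1$; both depend on $j$ only through $k = \gcd(j, b)$, the complementary bound $\alpha = k(b-a)/b \geq 0$ being automatic from $a < b$. The key arithmetic observation is then that $\gcd(j, b) = \gcd(j + b, b)$, whence the defining condition takes the same truth value at $j$ and at $j + b$.

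Finally I would assemble these pieces: for every $j \geq \max\{ab, b(b-a)\}$ both $j$ and $j + b$ exceed the threshold of Theorem \ref{complete-int1}, and by the previous paragraph $(j, j+a, j+b)$ is a complete intersection if and only if $(j+b, j+b+a, j+b+b)$ is. Hence the complete intersection predicate, and with it the Betti sequence, is periodic with period $b$ for $j \gg 0$. The one step requiring genuine care is the reduction itself---confirming that the coincidence of Betti numbers forced by Herzog's theorem is total, so that nothing beyond the complete intersection predicate can vary with $j$; once that is secured, the arithmetic periodicity is immediate from Theorem \ref{complete-int1}.
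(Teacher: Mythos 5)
Your proposal is correct and takes essentially the same route as the paper: both arguments reduce periodicity of the Betti numbers to periodicity of the complete intersection predicate via Herzog's dichotomy (Koszul Betti numbers $1,2,1$ versus Hilbert--Burch Betti numbers $1,3,2$), and then apply Theorem \ref{complete-int1}. The only difference is cosmetic: the paper transports the explicit witnesses $\alpha,\beta$ from $j$ to $j+b$ via $\alpha(j+b)+\beta(j+2b)=k(j+a+b)$, whereas you repackage the criterion as ``$\gcd(j,b)=k\neq 1$ and $b\mid ka$,'' which depends on $j$ only through $\gcd(j,b)=\gcd(j+b,b)$ and is therefore manifestly $b$-periodic.
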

\begin{proof}
Since the ideals $I$ in this case are either complete intersections or
height $2$ Cohen-Macaulay ideals generated by $3$ elements, we simply
need to show the periodicity of the number of generators.  
   
By Theorem \ref{complete-int1}, if this is a complete intersection,
then $(j,b) = k,~ k(j+a) = \alpha j+\beta (j+b)$, with $\alpha
+\beta = k$ and $\gcd(j, b) = k$.  Thus, $ \alpha (j+b)+\beta
(j+2b) = k(j+a) +(\alpha +\beta)b= k(j+a+b)$.  Therefore,
$(j+b, j+a+b, j+2b)$ also defines a complete intersection.  
   
Conversely, Suppose  $(j+a+b, j+a+b, j+2b)$ defines a complete
intersection. Since $j\ge \max\{ab,b(b-a)\}$, we have the same 
$\alpha, \beta$ giving the equations as before. Therefore, for $j \ge
\max\{ab,b(b-a)\},~  (j+rb,j+a+rb,j+b+rb)$ is a
complete intersection for all $r$ if and only if it is a complete 
intersection for $(j, j+a, j+b)$.  
Thus the eventual periodicity is true for $d=2$. 
\end{proof}

\section{Monomial curves in $\mathbb{A}^n$ for $n \geq 4$}
In this section we prove the periodicity of occurence of complete
intersections in the class $\Gamma_{\aa+(\jj)} \subset \mathbb{A}^n$
and characterize complete intersection monomial curves
in $\mathbb{A}^4$.
\begin{theorem}\label{main}
If $\aa = (a_1, \ldots, a_n)$, then $CI(\aa)$ is either finite or
eventually periodic with period $a_n$. If for $j \gg 0$, $\aa+(j)$ is
a complete intersection then there exists $1 \leq s \leq n-1$ and $k
\in \ZZ_+$ such that 
\begin{enumerate}
  \item[(a)] $j = a_nm$ for some $m \in \ZZ_+$, 
  \item[(b)] $\gcd(a_1, \ldots, a_{s-1}, a_s+a_{s+1}, a_{s+2}, \ldots,
	a_n) = k \neq 1$ and
  \item[(c)] $\displaystyle{ a_s \in \left \langle \frac{a_1}{k},
	\ldots, \frac{a_{s-1}}{k}, \frac{a_{s+1}}{k}, \ldots,
	\frac{a_n}{k} \right\rangle}$.
\end{enumerate}
\end{theorem}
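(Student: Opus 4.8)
The plan is to convert the complete intersection property into a single combinatorial split, read off the three conditions from that split, and then prove periodicity by transporting the split from $j$ to $j+a_n$. Concretely I would prove, by induction on $n$, the per-$j$ biconditional: \emph{for $j\gg 0$, $\aa+(\jj)$ is a complete intersection if and only if $a_n\mid j$ and conditions (b),(c) hold for some $1\le s\le n-1$}; Theorem \ref{main} is then immediate. For the forward direction fix $j\gg 0$ (so $j>a_n^2$) and assume $\aa+(\jj)$ is a complete intersection. By Lemma \ref{split-type1} the Delorme split of Theorem \ref{complete-int} has type $(1,n)$ or $(n,1)$, and by Lemma \ref{split-type2} the singleton block is neither $j=j+a_0$ nor $j+a_n$; hence the split peels off exactly one middle term $j+a_s$ with $1\le s\le n-1$. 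Writing it as $K\,(c_i)_{i\ne s}\sqcup C\,(g)$ with $Kc_i=j+a_i$ and $Cg=j+a_s$, Theorem \ref{complete-int} supplies $\gcd(C,K)=1$, $g\mid K$, $K\ne g$, and the membership $C=\sum_{i\ne s}\alpha_i c_i$ with $\alpha_i\in\ZZ_{\ge 0}$.

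From $K\mid j$ and $K\mid j+a_i$ for $i\ne s$ I get $K\mid a_i$ for every $i\ne s$, while $g=\gcd(j+a_s,K)=\gcd(a_s,K)$ shows $K\nmid a_s$; since $K\ne g$ forces $K\ge 2$, this already yields condition (b) with $k:=K$, which divides all $a_i$ ($i\ne s$) and is $\ne 1$ (and, once (a) is known, equals the full gcd, since then it divides $j$ as well). Clearing denominators in the membership relation gives $h(j+a_s)=\sum_{i\ne s}\alpha_i(j+a_i)$ with $h=K/g$; comparing coefficients of $j$ and using that the $\alpha_i$ are bounded independently of $j$, for $j\gg 0$ I must have $h=\sum_{i\ne s}\alpha_i$, whence $\sum_{i\ne s}\alpha_i(a_i-a_s)=0$. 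Multiplying through by $g/K$ rewrites $a_s=\sum_{i\ne s}(\alpha_i g)(a_i/K)$, which is condition (c). Condition (a) I would obtain from the induction: the big block $(c_i)_{i\ne s}$ is itself a complete intersection family with parameter $j'=j/K\to\infty$ and gaps $a_i/K$, whose largest gap is $a_n/K$; the inductive form of (a) gives $(a_n/K)\mid(j/K)$, i.e. $a_n\mid j$. The base case $n=2$ is Theorem \ref{complete-int1}.

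For periodicity I would note that the split data $(s,K,g,\{\alpha_i\})$ is $j$-free: (b), (c) and the relation $\sum_{i\ne s}\alpha_i(a_i-a_s)=0$ involve only $\aa$. Hence whenever (b),(c) hold for some $s$ and $a_n\mid j$ with $j\gg 0$, I can write down the very same split for $\aa+(\underline{j+a_n})$ (note $a_n\mid j+a_n$ and $K\mid a_n$, so all divisibilities persist), with its big block again a complete intersection by the inductive sufficiency statement. This gives $j\in CI(\aa)\iff j+a_n\in CI(\aa)$ for $j\gg 0$, and together with the forward direction it identifies $CI(\aa)\cap\{j\gg 0\}$ with either the empty set (when no $s$ satisfies (b),(c)) or exactly the large multiples of $a_n$; thus $CI(\aa)$ is finite or eventually periodic of period $a_n$.

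The main obstacle I expect is twofold. First, in the transport step the coprimality $\gcd(C,K)=1$ must be re-established after $j\mapsto j+a_n$; since $g=\gcd(a_s,K)=\gcd(a_1,\dots,a_n)$, this is automatic precisely when the sequence is primitive, and the general case needs a separate verification that $(j+a_s)/g$ stays prime to $K$ (equivalently, a clean reduction to $\gcd(a_i)=1$). Second, the induction for (a) and for sufficiency requires the big block to be a genuine instance of the same family with large parameter and with split data canonical enough that the inductive hypotheses apply verbatim; pinning down the ``bounded $\alpha_i$'' comparison and this canonicality is where the real work lies.
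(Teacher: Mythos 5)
Your forward (necessity) direction is essentially the paper's own proof: the same appeal to Lemmas \ref{split-type1} and \ref{split-type2} to force a split that peels off a middle term $j+a_s$, the same coefficient-comparison argument (the paper's Claim 1, $\sum_i\alpha_i=k$) to extract condition (c), and the same induction on the inner block to get condition (a). But the skeleton you hang everything on --- the per-$j$ biconditional ``for $j\gg 0$, $\aa+(\jj)$ is a complete intersection if and only if $a_n\mid j$ and (b),(c) hold for some $s$'' --- is false, and the paper's own Example \ref{ex2} refutes it. For $\aa=(8,17,18)$ take $s=2$: then $k=\gcd(8,18)=2\neq 1$ and $17=2\cdot\frac{8}{2}+1\cdot\frac{18}{2}\in\left\langle \frac{8}{2},\frac{18}{2}\right\rangle$, so (b) and (c) hold; yet the paper shows $\aa+(\jj)$ is never a complete intersection for $j>18^2$. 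The reason is exactly the bound your own necessity argument produces: a split forces a representation $k\,a_s=\sum_{i\neq s}\alpha_i a_i$ with $\sum_{i\neq s}\alpha_i\le k$, because the coefficient $\alpha_0$ of $j/k$ must absorb the non-negative slack $k-\sum_{i\neq s}\alpha_i$; condition (c) as stated records only the existence of \emph{some} non-negative representation, with no bound on the coefficient sum. For $(8,17,18)$ the only representation is $2\cdot 17=2\cdot 8+1\cdot 18$, with coefficient sum $3>2$, so no split exists even though (c) holds. This is precisely why Theorem \ref{main} claims only one implication, why Theorem \ref{crit}(2b) carries the extra hypothesis $\beta+\gamma\le k$, and why the Proposition after Corollary \ref{cor1} needs additional assumptions.

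The error propagates into your periodicity argument: you invoke an ``inductive sufficiency statement'' to conclude that the transported big block is a complete intersection, and you conclude that $CI(\aa)\cap\{j\gg 0\}$ equals the set of all large multiples of $a_n$ whenever some $s$ satisfies (b),(c) --- both false by the same example, where that intersection is empty. The repair is what the paper actually does: make the inductive statement the periodicity assertion itself, and transport the \emph{actual} split at $j$ rather than reconstructing one from (b),(c). If $j\in CI(\aa)$ you have concrete coefficients $\alpha_i$ with $\sum_i\alpha_i=k/g$, and since $k\mid a_n$ the identity $\sum_i \alpha_i\frac{j+a_n+a_i}{k}=\frac{j+a_s}{g}+\frac{a_n}{g}=\frac{j+a_n+a_s}{g}$ gives the required membership at $j+a_n$; the shifted inner block is a complete intersection by the inductive \emph{periodicity} hypothesis applied to the length-$(n-1)$ family (whose period is $a_n/k$), not by any sufficiency of (b),(c). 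With that reorganization your outline collapses onto the paper's proof; as written, the central claim it rests on is contradicted by the paper.
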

\begin{proof}
We first assume that $\gcd(a_1, \ldots, a_n) = 1$.
Assume that $CI(\aa)$ is not finite. 
Assume
that $\aa+(\jj)$ is a complete intersection. Therefore it follows from
Lemma \ref{split-type1} and Lemma \ref{split-type2}, that  we have the
complete intersection split of the form
\begin{eqnarray*}\label{eqn2}
\aa+(\jj) & = &  \frac{j+a_s}{k'} (k') \sqcup k
\left(\frac{j}{k}, \frac{j+a_1}{k}, \ldots,
\frac{j+a_{s-1}}{k}, \frac{j+a_{s+1}}{k},
\ldots, \frac{j+a_n}{k}\right)
\end{eqnarray*}
which satisfies the conditions in Theorem \ref{complete-int}. 
Furthermore, we have the following:
\begin{enumerate}
  \item $k' \mid k$ and $k' \neq k$. 
  \item $k \mid \gcd(a_1, \ldots, a_{s-1}, a_{s+1}, \ldots, a_n)$.
  \item Since $k' \mid k$, it divides $a_i$ for $i = 1, \ldots,
	s-1$ and it divides $j$ as well. Therefore, $k' \mid a_s$ and
	hence $k' \mid a_i$ for all $i = 1, \ldots, n$. This implies
	that $k' \mid \gcd(a_1, \ldots, a_n) = 1$. Therefore $k' =
	1$.
\end{enumerate}

\noindent
Since $\left(\frac{j}{k}, \frac{j+a_1}{k}, \ldots,
\frac{j+a_{s-1}}{k}, \frac{j+a_{s+1}}{k},
\ldots, \frac{j+ a_n}{k}\right)$ is a complete
intersection sequence (associated to a sequence of length $n-1$), it
follows by induction on $n$ that for $j \gg 0$, 
$$
\frac{j}{k} = \frac{a_n}{k}m
$$
and hence $j = a_nm$, where $m$ is a positive integer. Since $k' = 1$,
we have
$$j+a_s \in \left\langle
\frac{j}{k}, \frac{j+a_1}{k}, \ldots,
\frac{j+a_{s-1}}{k}, \frac{j+a_{s+1}}{k},
\ldots, \frac{j+ a_n}{k} \right\rangle,$$
and therefore
there exist some non-negative integers $\alpha_1, \ldots, \alpha_n$,
not all zero such that
\begin{eqnarray}\label{eqn3}
j+\sum_{i=1}^s a_i = \alpha_1 \frac{j}{k} + \cdots + \alpha_s
\frac{j+a_{s-1}}{k} + \alpha_{s+1} \frac{j+a_{s+1}}{k} + \cdots + \alpha_n
\frac{j+a_n}{k}.
\end{eqnarray}
\textsc{Claim 1:} $\sum_{i=1}^n \alpha_i = k$.
\vskip 2mm \noindent
\textit{Proof of Claim 1:} From the above equation, we can write
\begin{eqnarray*}
  a_s & = & \left(\sum_{i=1}^n \alpha_i - k\right)
  \frac{j}{k} + \sum_{l=1}^s \alpha_l\frac{a_{l-1}}{k} + \sum_{l=s+1}^n \alpha_l
  \frac{a_l}{k}.
\end{eqnarray*}
Suppose $\sum_{i=1}^n \alpha_i > k$. If $j > a_n^2$,
 then $\frac{j}{k} > a_n$ and hence we get
that $a_s > a_n$, a contradiction.

\vskip 2mm \noindent
Suppose $\sum_{i=1}^n \alpha_i < k$. Then we get
\begin{eqnarray*}
  a_t & \leq & \left(\sum_{i=1}^n \alpha_i - k\right)
  \frac{j}{k} + \frac{1}{k}\left(\sum_{i=1}^n \alpha_i\right)
  a_n \\
  & < & \left(\sum_{i=1}^n \alpha_i - k\right)
  \frac{j}{k} + a_n \leq 0,
\end{eqnarray*}
where the last inequality holds since $\frac{j}{k} \geq \sum_{i=1}^n
a_i$. This is contradiction since $a_s > 0$. Therefore,
we have shown that neither of the cases
\begin{enumerate}
  \item[(a)] $\sum_{i=1}^n \alpha_i > k$
  \item[(b)] $\sum_{i=1}^n \alpha_i < k$
\end{enumerate}
are not possible. Therefore, $\sum_{i=1}^n \alpha_i = k$. This
completes the proof of the claim.

\vskip 2mm \noindent
\textsc{Claim 2:} $\aa + (\underline{j+a_n}) = \left(j+a_n, j+a_n + a_1, \ldots,
j+ 2a_n \right)$ is a complete intersection sequence.

\vskip 2mm \noindent
\textit{Proof of Claim 2:} We show that this sequence has a complete
intersection split similar to that of $\aa_j$. Choose $\alpha_1,
\ldots, \alpha_n$ as in (\ref{eqn3}). Therefore we have $\sum_{i=1}^n
\alpha_i = k$ so that 
\begin{eqnarray*}
  \sum_{l=1}^s\alpha_l\left(\frac{j}{k} + \frac{a_n}{k} + 
  \frac{a_{l-1}}{k}\right) & + &  
  \sum_{l=s+1}^n \alpha_l \left(\frac{j}{k} + \frac{a_n}{k} + 
  \frac{a_l}{k}\right) \\
  & = & j+a_s + \left(\sum_{i=1}^n \alpha_i \right)
  \frac{a_n}{k}\\
  & = & j+a_s+ a_n
\end{eqnarray*}
Therefore, $$
j + a_n + a_s \in
\left\langle \frac{j}{k}+\frac{a_n}{k}, \ldots, 
\frac{j}{k} +
\frac{a_{s-1}}{k}+ \frac{a_n}{k},
\frac{j}{k} +
\frac{a_{s+1}}{k}+ \frac{a_n}{k},\ldots
\frac{j}{k} +
\frac{a_n}{k}+ \frac{a_n}{k} \right\rangle,
$$
We need to show that this split satisfies
all the properties of Theorem \ref{complete-int}. Since
$\gcd(j+a_s, k) = 1$ and $k \mid a_n$,
$\gcd(j+a_n + a_s, k) = 1$. Note that
$\gcd\left(\frac{a_1}{k}, \ldots, \frac{a_{s-1}}{k},
\frac{a_{s+1}}{k}, \ldots, \frac{a_n}{k}
\right) = 1$. Since 
$ 
\left(\frac{j}{k}, \frac{j+a_1}{k}, \ldots,
\frac{j+a_{s-1}}{k}, \frac{j+a_{s+1}}{k},
\ldots, \frac{j+ a_n}{k}\right)
$ 
is a complete intersection, by induction on $n$, we get that
$$
\left( \frac{j}{k}+\frac{a_n}{k}, \ldots, 
\frac{j}{k} +
\frac{a_{s-1}}{k}+ \frac{a_n}{k},
\frac{j}{k} +
\frac{a_{s+1}}{k}+ \frac{a_n}{k},\ldots,
\frac{j}{k} +
\frac{a_n}{k}+ \frac{a_n}{k} \right)
$$
is a complete intersection sequence. Therefore, if $CI(\aa)$ is
infinite, then it is eventually periodic with period $a_n$.

Now let $k' = \gcd(a_1, \ldots, a_n)$.
Assume that $\aa + (\jj)$ is a complete intersection.
Then we have a split of the form:
$$
\aa + (\jj) = \frac{j+a_s}{k'} ~ (k') \sqcup k
\left(\frac{j}{k}, \ldots, \frac{j+a_{s-1}}{k},
\frac{j+a_{s+1}}{k}, \ldots, \frac{j+a_n}{k}
\right),
$$
where $k = \gcd(a_1, \ldots, a_{s-1}, a_{s+1}, \ldots,
a_n)$, $k' \mid k$, $k \neq k'$,
$\gcd\left(\frac{j+a_s}{k'}, k\right) = 1$ and \\
$\left(\frac{j}{k}, \ldots, \frac{j+a_{s-1}}{k},
\frac{j+a_{s+1}}{k}, \ldots, \frac{j+a_n}{k}
\right)$ is a complete intersection. Since $\gcd\left(\frac{a_1}{k},
\ldots, \frac{a_{s-1}}{k}, \frac{a_{s+1}}{k}, 
\ldots, \frac{a_n}{k}\right) = 1$, we can use the Theorem \ref{main}
to conclude that $\frac{j}{k} = \frac{a_n}{k} m$ for some
$m \in \ZZ_+$. Hence $j = (\sum_{i=1}^n a_i)m$. We also have that\\
$\displaystyle{\left(\frac{j+a_n}{k}, \ldots,
\frac{j+a_n+a_{s-1}}{k},
\frac{j+a_n+a_{s+1}}{k}, \ldots,
\frac{j+a_n+a_n}{k}
\right)}$  is a complete intersection, since the period being
$\frac{a_n}{k}$. This shows that we have a complete
intesection split 
$$\aa+(\underline{j+a_n}) = 
\frac{j+a_n+a_s}{k'} ~(k') \sqcup
k \left(\frac{j+a_n}{k}, \ldots, \frac{j+a_n+a_{s-1}}{k},
\frac{j+a_n+a_{s+1}}{k}, \ldots,
\frac{j+a_n+a_n}{k}\right).$$
\vskip 2mm \noindent
This implies that $\aa + (\underline{j+a_n})$ is a complete 
intersection, proving the periodicity as well.
\end{proof}
As a consequence of the above result, we relate the complete
intersection property of $\aa$ and $\aa + (\jj)$ for $j \gg 0$.
\begin{corollary}\label{cor1}
For $j \gg 0$, if $\aa + (\jj)$ is a complete intersection, then $\aa$
is a complete intersection. 
\end{corollary}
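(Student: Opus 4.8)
The plan is to induct on $n$ and to read conditions (a)--(c) of Theorem \ref{main} as an explicit complete intersection split of $\aa=(a_1,\ldots,a_n)$ in the sense of Theorem \ref{complete-int}. The base of the induction is $n\le 2$: then $\aa$ has at most two entries, so $R=k[t^{a_1},\ldots,t^{a_n}]$ is a polynomial ring or is defined by a single binomial, hence is automatically a complete intersection. So I would concentrate on the inductive step $n\ge 3$, assuming the corollary for all shorter sequences.

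For the inductive step, since $\aa+(\jj)$ is a complete intersection for $j\gg 0$, Theorem \ref{main} and its proof furnish an index $1\le s\le n-1$ and an integer $k=\gcd(a_i:i\neq s)\neq 1$ together with the split
$$
\aa+(\jj)=\frac{j+a_s}{k'}\,(k')\;\sqcup\; k\left(\frac{j}{k},\frac{j+a_1}{k},\ldots,\frac{j+a_{s-1}}{k},\frac{j+a_{s+1}}{k},\ldots,\frac{j+a_n}{k}\right).
$$
The key observation is that the second factor is precisely the translate by $j/k$ of the length-$(n-1)$ sequence $\underline{c}:=\left(\frac{a_1}{k},\ldots,\frac{a_{s-1}}{k},\frac{a_{s+1}}{k},\ldots,\frac{a_n}{k}\right)$, which has $\gcd=1$. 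Because $k\le a_n$ stays bounded while $j\to\infty$, we have $j/k\gg 0$; and being a factor of a complete intersection split, the second sequence is itself a complete intersection. Hence the inductive hypothesis applies to $\underline{c}$ and shows that $\underline{c}$ is a complete intersection.

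It then remains to glue $a_s$ back in. In the coprime case $\gcd(a_1,\ldots,a_n)=1$ I would propose the split
$$
\aa=a_s\,(1)\;\sqcup\; k\left(\frac{a_1}{k},\ldots,\frac{a_{s-1}}{k},\frac{a_{s+1}}{k},\ldots,\frac{a_n}{k}\right)
$$
and verify the hypotheses of Theorem \ref{complete-int}: the one-term sequence $(1)$ is trivially a complete intersection and $\underline{c}$ is one by induction; $k\in\langle 1\rangle$ with $k\neq 1$; the multiplier $a_s$ lies in $\langle\frac{a_i}{k}:i\neq s\rangle$, which is exactly condition (c); and finally $\gcd(a_s,k)=\gcd(a_1,\ldots,a_n)=1$. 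This produces the required complete intersection split of $\aa$.

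The step I expect to be the main obstacle is the coprimality condition, since $\gcd(a_s,k)=\gcd(a_1,\ldots,a_n)$, so the split above is valid verbatim only when $\gcd(a_1,\ldots,a_n)=1$ (equivalently $k'=1$). For a general sequence I would reduce to this case through the isomorphism $k[t^{a_1},\ldots,t^{a_n}]\cong k[u^{a_1/d},\ldots,u^{a_n/d}]$ with $d=\gcd(a_1,\ldots,a_n)$, which shows $\aa$ is a complete intersection if and only if $\aa/d$ is. The technical heart is then to carry the divisibility bookkeeping $k'\mid k$, $k'\mid j$, and $\gcd\!\left(\frac{j+a_s}{k'},k\right)=1$ through this reduction: dividing out the translate $j/k$, the Delorme condition on the singleton multiplier becomes $\frac{a_s}{d}\in\langle\frac{a_i}{k}:i\neq s\rangle$, which is precisely the membership needed for $\aa/d$, while $\gcd\!\left(\frac{a_s}{d},\frac{k}{d}\right)=1$ follows from $\gcd(\aa/d)=1$. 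Making sure conditions (c) and the coprimality survive this rescaling is the one place where care is genuinely required.
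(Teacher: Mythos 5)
Your proposal is correct and is essentially the paper's own proof: induction on $n$ with base $n\le 2$, the split supplied by Theorem \ref{main}, the inductive hypothesis applied to the length-$(n-1)$ factor $\left(\frac{a_1}{k},\ldots,\frac{a_{s-1}}{k},\frac{a_{s+1}}{k},\ldots,\frac{a_n}{k}\right)$ (which has gcd one and translate $j/k\gg 0$), and the gluing $\aa = a_s\,(1)\sqcup k\left(\frac{a_1}{k},\ldots,\frac{a_{s-1}}{k},\frac{a_{s+1}}{k},\ldots,\frac{a_n}{k}\right)$ justified by Theorem \ref{complete-int}. The only difference is in the non-coprime case, where the paper's route is slightly cleaner than the bookkeeping you flag as delicate: since $j=a_nm$ by Theorem \ref{main}(a), one has $d\mid j$ for $d=\gcd(a_1,\ldots,a_n)$, so the paper simply divides the entire translated sequence $\aa+(\jj)$ by $d$ and re-applies the already-proved coprime case to $\aa/d$, rather than transporting the Delorme split of $\aa+(\jj)$ through the rescaling.
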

\begin{proof}
First assume that $\gcd(a_1, \ldots, a_n) = 1$.
We prove the first statement by induction on $n$. If $n = 1$, there is
nothing to prove and for $n = 2,$ $(a_1, a_2)$ is always a complete
intersection.  Assume now that $\aa = (a_1, \ldots, a_n)$ with $n \geq
3$ and $\aa + (\jj)$ is a complete intersection for $j \gg 0$.  By
Theorem \ref{main}, there exists a $s$ and $k$ such that 
$$
\aa + (\jj) = j+a_s ~ (1) \sqcup k \left(\frac{j}{k},
\frac{j+a_1}{k}, \ldots, \frac{j+a_{s-1}}{k}, \frac{j+a_{s+1}}{k},
\ldots, \frac{j+a_n}{k}\right)
$$
with $\left(\frac{j}{k}, \frac{j+a_1}{k}, \ldots, \frac{j+a_{s-1}}{k},
\frac{j+a_{s+1}}{k}, \ldots, \frac{j+a_n}{k}\right)$ a complete
intersection, $\gcd\left(j+a_s, k\right) = 1$
and $a_s \in \left\langle
\frac{a_1}{k}, \ldots, \frac{a_{s-1}}{k}, \frac{a_{s+1}}{k}, \ldots,
\frac{a_n}{k}\right\rangle$. By induction on $n$, we get that
$\left(\frac{a_1}{k}, \ldots, \frac{a_{s-1}}{k}, \frac{a_{s+1}}{k},
\ldots, \frac{a_n}{k}\right)$ is a complete intersection. Hence we
have a complete intersection split:
$$
\aa = a_s~(1) \sqcup k\left(\frac{a_1}{k}, \ldots,
\frac{a_{s-1}}{k}, \frac{a_{s+1}}{k}, \ldots, \frac{a_n}{k}\right).
$$
Therefore $\aa$ is a complete intersection. Now assume that $\gcd(a_1,
\ldots, a_n) = k'$. Since $\aa + (\jj)$ is a complete intersection for
$j \gg 0$, it follows from Theorem \ref{main} that $j = a_nm$ for some
$m \in \ZZ_+$. Therefore $k' \mid j$. Let $j' = \frac{j}{k'}$ and $a_i
= \frac{a_i}{k'}$. Since $\aa + (j)$ is a complete intersection so is
$(j', j'+a_1', \ldots, j'+a_n')$. By the first part, this implies that
$(a_1', \ldots, a_n')$ is a complete intersection and hence $(a_1,
\ldots, a_n)$ too is a complete intersection.
\end{proof}

We now prove a partial converse of the above corollary. It can be seen
that a converse statement of Corollary \ref{cor1} is not true, cf.
Example \ref{ex2}
\begin{proposition}
If $n \geq 3$ and $\aa$ is a complete
intersection and $k_{i+1}a_i \in \langle a_{i+1}, \ldots, a_n
\rangle$, where $k_i = \gcd(a_i, \ldots, a_n)$, then there exists $j
\gg 0$ such that $\aa + (\jj)$ is a complete intersection. 
\end{proposition}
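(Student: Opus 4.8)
The plan is to argue by induction on $n$, peeling off the smallest generator at each stage. Since the conclusion only concerns $j\gg0$, I will look for $j$ among the large multiples of $a_n$; by the periodicity already proved in Theorem \ref{main} it then suffices to produce one complete intersection value of $j$, and the induction will provide it. Throughout put $k_i=\gcd(a_i,\ldots,a_n)$, so that $k_1\mid k_2\mid\cdots\mid k_n=a_n$, and rewrite the hypothesis in the equivalent form $a_i\in\langle a_{i+1}/k_{i+1},\ldots,a_n/k_{i+1}\rangle$ for $1\le i\le n-1$. I first extract two facts from the bottom index $i=1$. The inclusion $a_1\in\langle a_2/k_2,\ldots,a_n/k_2\rangle$ with $0<a_1<a_2$ forces $k_2>1$: if $k_2=1$ the smallest positive element of $\langle a_2,\ldots,a_n\rangle$ would be $a_2>a_1$. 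Moreover $a_2/k_2$ is the smallest generator on the right and $a_1<a_2=k_2\cdot(a_2/k_2)$, so any expression $a_1=\sum_{i\ge2}\alpha_i(a_i/k_2)$ has $\sum_i\alpha_i<k_2$.

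These are exactly the ingredients to split off $a_1$. For $j$ a multiple of $a_n$ (hence of $k_2$) I propose the decomposition
\[
\aa+(\jj)=(j+a_1)\,(1)\ \sqcup\ k_2\Big(\tfrac{j}{k_2},\tfrac{j+a_2}{k_2},\ldots,\tfrac{j+a_n}{k_2}\Big),
\]
and check the conditions of Theorem \ref{complete-int}. The coprimality condition is $\gcd(j+a_1,k_2)=\gcd(a_1,k_2)=\gcd(a_1,\ldots,a_n)$, which equals $1$ in the principal case $\gcd(\aa)=1$; the membership $j+a_1\in\langle \tfrac{j}{k_2},\ldots,\tfrac{j+a_n}{k_2}\rangle$ follows from
\[
j+a_1=\Big(k_2-\textstyle\sum_i\alpha_i\Big)\tfrac{j}{k_2}+\sum_{i\ge2}\alpha_i\,\tfrac{j+a_i}{k_2},
\]
which is a genuine nonnegative combination precisely because $\sum_i\alpha_i<k_2$; and the minimality conditions ($k_2\neq1$, and $j+a_1$ not a generator of the second factor) hold for $j\gg0$. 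Thus the split is a valid instance of Theorem \ref{complete-int} as soon as its right–hand factor is known to be a complete intersection.

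That factor is isomorphic, as a numerical semigroup, to $\langle j,j+a_2,\ldots,j+a_n\rangle=\langle\aa^*+(\jj)\rangle$ with $\aa^*=(a_2,\ldots,a_n)$, and here the induction enters. The sequence $\aa^*$ inherits the hypothesis: its tail gcds are again $k_2,\ldots,k_n$, so its chain conditions are exactly those of $\aa$ at indices $2,\ldots,n-1$. It is also a complete intersection: the decomposition $\aa=a_1(1)\sqcup k_2(a_2/k_2,\ldots,a_n/k_2)$ satisfies every condition of Theorem \ref{complete-int} except possibly that its two factors be complete intersections, and since for such a split the minimal number of generators satisfies $\mu(I)=\mu(I_1)+\mu(I_2)+1$ while the embedding dimensions add, the complete intersection $\aa$ forces both factors, in particular $(a_2/k_2,\ldots,a_n/k_2)$ and hence $\aa^*$, to be complete intersections. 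By the inductive hypothesis $CI(\aa^*)$ is nonempty, so by Theorem \ref{main} every sufficiently large multiple of $a_n$ lies in $CI(\aa^*)$; choosing $j=a_nm$ with $m$ large makes the right–hand factor a complete intersection and, together with the verification above, completes the inductive step in the principal case.

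The step I expect to be the real obstacle is the interaction with $\gcd(\aa)$. The clean peel of $a_1$ needs $\gcd(j+a_1,k_2)=\gcd(\aa)=1$, so it applies verbatim only when $\gcd(\aa)=1$; and some nondegeneracy of this kind is genuinely necessary, since e.g. $\aa=(2,4,6,8)$ is a chain–type complete intersection with $\gcd(\aa)=2$ for which $\aa+(\jj)$ is never a complete intersection for large $j$ (dividing by $2$ leaves five consecutive integers). The subtlety is that the reduced sequence $\aa^*$ produced by the induction has $\gcd(\aa^*)=k_2>1$, so the inductive step cannot simply re-peel the smallest generator; one must instead use the general form of the split from the proof of Theorem \ref{main}, namely $\frac{j+a_s}{k'}(k')\sqcup k(\cdots)$ with $k'=\gcd(\aa^*)$, peeling a generator $a_s$ whose deletion strictly raises the gcd and fixing the residue of $j$ (by the Chinese Remainder Theorem, compatibly with $a_n\mid j$) so that $\gcd\big(\tfrac{j+a_s}{k'},k\big)=1$. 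Showing that such an $s$ exists and that the corresponding membership $\tfrac{j+a_s}{k'}\in\langle\cdots\rangle$ holds — the analogue of Claim~1 in Theorem \ref{main}, where the coefficient sum is forced to equal the multiplier — is where the chain hypothesis must be used most carefully, and is the crux of the argument.
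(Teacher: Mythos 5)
Your argument is not a complete proof, and you say so yourself: the peel of $a_1$ reduces the Proposition for $\aa$ (with $\gcd(\aa)=1$) to the same Proposition for $\aa^*=(a_2,\ldots,a_n)$, which has $\gcd(\aa^*)=k_2>1$, and the case of nontrivial gcd is exactly what you defer as ``the crux.'' What you do verify is sound: the hypothesis at $i=1$ forces $k_2>1$ and coefficient sum $<k_2$; the split $(j+a_1)(1)\sqcup k_2\bigl(\tfrac{j}{k_2},\ldots,\tfrac{j+a_n}{k_2}\bigr)$ satisfies the conditions of Theorem~\ref{complete-int} when $a_n\mid j$ and $\gcd(\aa)=1$; the factor is a complete intersection iff $\aa^*+(\jj)$ is; and $\aa^*$ inherits the literal chain hypothesis verbatim and is itself a complete intersection --- though this last step uses additivity of the minimal number of generators under gluing, a genuine extra input (it is essentially in Delorme's paper but not among the results quoted here, and the paper's own proof of the Proposition skips this verification entirely). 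Still, without the inductive step there is no proof.

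The more important point is that the missing step cannot be supplied, because the Proposition as literally stated is false --- your example is correct. For $\aa=(2,4,6,8)$ one has $k_2a_1=4\in\langle 4,6,8\rangle$, $k_3a_2=8\in\langle 6,8\rangle$, $k_4a_3=48\in\langle 8\rangle$, and $\aa$ is a complete intersection; yet for $j\gg 0$, if $j$ is even then $\aa+(\jj)$ is twice the sequence $(m,m+1,m+2,m+3,m+4)$, whose ideal contains the six independent quadrics $x_ix_j-x_kx_l$ ($i+j=k+l$) but has height four, and if $j$ is odd then Lemmas~\ref{split-type1} and~\ref{split-type2} force a split isolating one middle term with the remaining block sharing a factor $k\neq 1$, impossible since that block always contains two odd entries differing by $2$. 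Nor is the gcd the culprit: $(1,2,4,6,8)$ has gcd $1$, is a complete intersection, satisfies $k_{i+1}a_i\in\langle a_{i+1},\ldots,a_n\rangle$ for every $i$, and fails the conclusion for the same reason (the only deletion leaving a common factor $\neq 1$ is that of $a_1=1$, and the resulting block is again five consecutive integers up to scaling). So no argument can close your induction for the literal statement. Note that the paper's own proof founders at precisely the same spot: in the principal case it applies induction to $(a_2/k_2,\ldots,a_n/k_2)$, and in the general case it divides $\aa$ by $\gcd(\aa)$, never checking that the chain hypothesis survives division --- under the literal reading it does not (your $(2,4,6,8)\mapsto(1,2,3,4)$ is exactly this failure). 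Statement and proof are both repaired by replacing the hypothesis with the scaling-invariant ``free/telescopic'' condition $\frac{k_{i+1}}{k_i}a_i\in\langle a_{i+1},\ldots,a_n\rangle$ (which agrees with the literal one at $i=1$ when $\gcd(\aa)=1$): that condition is preserved when one divides by $k_2$, so the induction can be run entirely among gcd-one sequences, and then your peel of $a_1$ completes the proof.
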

\begin{proof}
First assume that $\gcd(a_1, \ldots, a_n) =
1$. We prove the assertion by induction on $n$. Let $n = 3$.
Let
$$
\aa = a_1~(1) \sqcup k\left(\frac{a_2}{k}, \frac{a_3}{k}\right),
$$
where $k = \gcd(a_2, a_3)$. Since $a_1 \in \left\langle \frac{a_2}{k},
\frac{a_3}{k}\right\rangle$, we can write $k a_1 = \beta a_2 + \gamma
a_3$. Since $a_1 < a_i$ for $i = 2, 3$, $k \geq \beta + \gamma$. Let
$\alpha = k - \beta - \gamma$. Then for $j \geq 0$, $(j + a_1) =
\alpha \frac{j}{k} + \beta \frac{j+a_2}{k} + \gamma \frac{j+a_3}{k}$.
By Theorem \ref{complete-int1}, there exists $j \gg 0$, $j = a_3m$
such that $\left(\frac{j}{k}, \frac{j+a_2}{k}, \frac{j+a_3}{k}\right)$
is a complete intersection.

Let $a_1 = \alpha_2 \frac{a_2}{k} + \cdots +
\alpha_n\frac{a_n}{k}$. Since $a_1 < a_i$ for all $i =
2, \ldots, n$, $\sum_{i=2}^n \alpha_i\leq k$. Let $\alpha_1 = k -
\sum_{i=2}^n \alpha_i$. Then for any $j > 0$, we can write
$$
j+a_1 = \alpha_1 \frac{j}{k} + \alpha_2 \frac{j+a_2}{k} +
\cdots + \alpha_n \frac{j+a_n}{k}.
$$
Since $\frac{a_2}{k_2} \in \left\langle \frac{a_3}{k_2k_3}, \ldots,
\frac{a_n}{k_2k_3}\right\rangle$ and $\left(\frac{a_3}{k_2k_3}, \ldots,
\frac{a_n}{k_2k_3}\right)$ is a complete intersection, by induction,
we get that $\left(\frac{j}{k_2}, \frac{j+a_2}{k_2}, \ldots,
\frac{j+a_n}{k_n}\right)$ is a complete intersection for some $j \gg
0$. Therefore by Theorem \ref{complete-int}, $\aa + (\jj)$ is a complete
intersection. If $\gcd(a_1, \ldots, a_n) = k_1 \neq 1$, then we can divide
by $k_1$ to get a complete intersection sequence $\aa'$, apply the
first part to obtain a $j'$ such that $\aa' + (\jj')$ is a complete
intersection and then by multiplying by $k_1$ to conclude that
$\aa+(\jj)$ is a complete intersection.
\end{proof}

We now characterize the complete intersection sequences when $n = 3$. 
It is actually possible to formulate a similar result in
the general case, but it is highly complicated. Therefore, we stick to
the case of $n = 3$.
\begin{theorem}\label{crit}
Let $\aa + (\jj) = (j, j+a, j+b, j+c)$. Then $\aa + (\jj)$ is a
complete intersection for $j \gg 0$ if and only if there exist
non-negative integers $m, k, \beta, \gamma$ such that $j = cm$,
$k \neq 1$ and one of the follwing is satisfied:
\begin{enumerate}
  \item[(1a)] $\gcd(a, c) = k$ and
  \item[(1b)] $ka = \beta b + \gamma c$. 
\end{enumerate}
\centerline{OR}
  \begin{enumerate}
	\item[(2a)] $\gcd(b, c) = k$ 
	\item[(2b)] $kb = \beta a + \gamma(c)$ with $\beta +
	  \gamma \leq k$.
  \end{enumerate}
\end{theorem}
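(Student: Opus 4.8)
The plan is to read the statement off the structure theorem for complete intersection splits, Theorem \ref{main} specialised to $n=3$, together with the two structural lemmas. Since $j\gg 0$ forces $c\mid j$ by Theorem \ref{main}(a), the common factor $d=\gcd(a,b,c)$ divides $c$ and hence $j$, so it divides every entry of $(j,j+a,j+b,j+c)$; dividing through by $d$ yields an isomorphic semigroup ring. I would therefore first reduce to the primitive case $\gcd(a,b,c)=1$, in which the coprimality hypotheses of Theorem \ref{complete-int} come for free. The non-primitive case would then be recovered by scaling, exactly as in the last step of Corollary \ref{cor1}, with the numerical data read off the primitive sequence.

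For necessity, suppose $(j,j+a,j+b,j+c)$ is a complete intersection with $j\gg 0$. By Lemma \ref{split-type1} the split is of type $(1,3)$ or $(3,1)$, and by Lemma \ref{split-type2} the singleton is neither $j$ nor $j+c$; hence it is $j+a$ or $j+b$, and $j=cm$. If the singleton is $j+a$, the surviving group is $(j,j+b,j+c)$, whose common factor is $k=\gcd(j,b,c)=\gcd(b,c)$ because $c\mid j$; if it is $j+b$ the surviving group is $(j,j+a,j+c)$ with $k=\gcd(a,c)$. In either case the split writes the singleton value inside the semigroup generated by the reduced surviving group, and the counting identity of Claim 1 in the proof of Theorem \ref{main} forces the coefficient sum to equal $k$; cancelling the common $j$-terms then yields $ka=\beta b+\gamma c$ in the first case and $kb=\beta a+\gamma c$ (with $\beta+\gamma\le k$, since the coefficient of $j/k$ is non-negative) in the second. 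Finally $k\neq 1$, for otherwise the coefficient sum $k=1$ would force the singleton value to coincide with one of the reduced generators, i.e. $a$ (resp. $b$) would lie in $\{0\}\cup\{\text{surviving slopes}\}$, impossible since $0<a<b<c$. These give the two alternatives recorded in the statement, according as the singleton is $j+a$ or $j+b$.

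For sufficiency I would reverse the construction, as in the proof of the preceding Proposition. Take $j=cm$ with $j\gg 0$, $k\neq 1$, and, say, $\gcd(b,c)=k$ with $ka=\beta b+\gamma c$; put $\alpha=k-\beta-\gamma$, which is non-negative because $b,c>a$ give $\beta b+\gamma c>(\beta+\gamma)a$. Then $j+a=\alpha\tfrac{j}{k}+\beta\tfrac{j+b}{k}+\gamma\tfrac{j+c}{k}$ exhibits $j+a$ in the semigroup of the reduced group, which I write as $(J,J+B,J+C)$ with $J=j/k=Cm$, $B=b/k$, $C=c/k$. Since $c>b$ forces $C>1$, Theorem \ref{complete-int1} applies: $\gcd(J,C)=C\neq 1$ and the relation $C(J+B)=(C-B)J+B(J+C)$ show this group is a complete intersection. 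In the primitive case $\gcd(j+a,k)=\gcd(a,k)=\gcd(a,\gcd(b,c))=1$, so the coprimality and non-membership clauses of Theorem \ref{complete-int} hold, and assembling the data gives the split $\,j+a\,(1)\sqcup k\left(\tfrac{j}{k},\tfrac{j+b}{k},\tfrac{j+c}{k}\right)$. The case $\gcd(a,c)=k$, $kb=\beta a+\gamma c$ is entirely symmetric.

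The main obstacle I expect is the bookkeeping in the sufficiency direction together with the passage between the primitive and non-primitive cases: one must certify simultaneously that the reduced three-term group is a complete intersection through Theorem \ref{complete-int1} (which requires the divisibility $c\mid j$ to identify $\gcd(J,C)$) and that every coprimality and non-membership clause of Theorem \ref{complete-int} is met, and then check that dividing out $\gcd(a,b,c)$ carries the two numerical alternatives faithfully. By contrast the necessity direction is essentially a repackaging of Theorem \ref{main}: once the singleton is pinned to $j+a$ or $j+b$, the relations and the condition $k\neq 1$ drop out of the Claim 1 counting argument.
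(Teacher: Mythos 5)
Your re-pairing of the printed conditions is the right reading of the statement: as written, (1a)--(1b) and (2a)--(2b) mix the two cases --- with the printed pairing the theorem would reject the paper's own worked example $(11,16,28)$, where $\gcd(16,28)=4$ and $4\cdot 11=16+28$ certify a complete intersection for $j=28m$, $m>1$ --- and the consistent pairing (singleton $j+a$: $k=\gcd(b,c)$, $ka=\beta b+\gamma c$; singleton $j+b$: $k=\gcd(a,c)$, $kb=\beta a+\gamma c$ with $\beta+\gamma\le k$) is clearly what is intended. With that correction, your argument in the primitive case $\gcd(a,b,c)=1$ is correct and is essentially the paper's own proof: necessity by pinning the singleton via Lemmas \ref{split-type1} and \ref{split-type2} and the coefficient count of Claim 1 in Theorem \ref{main}; sufficiency by assembling the split $(j+a)\,(1)\sqcup k\,(j/k,(j+b)/k,(j+c)/k)$ and certifying the three-term block with Theorem \ref{complete-int1}.

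The genuine gap is exactly the step you flag and then wave through: the passage to the non-primitive case. Dividing by $d=\gcd(a,b,c)$ preserves the complete intersection property, but it does not carry the numerical alternatives faithfully: for the reduced triple $(a/d,b/d,c/d)$ the relevant gcd is $k/d$, so sufficiency for the reduced triple requires $(k/d)\,a=\beta b+\gamma c$, which is strictly stronger than the stated $ka=\beta b+\gamma c$. (The analogy with Corollary \ref{cor1} is misleading: there only the scaling-invariant property of being a complete intersection is transported, not a relation whose coefficient is itself a gcd.) The implication you need is false, and so is the theorem for non-primitive triples: take $(a,b,c)=(8,18,30)$ and $j=30m$. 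Then $k=\gcd(18,30)=6\neq 1$ and $6\cdot 8=48=18+30$, so the conditions hold; yet $(30m,30m+8,30m+18,30m+30)=2\,(15m,15m+4,15m+9,15m+15)$, and the primitive triple $(4,9,15)$ fails both alternatives ($\gcd(4,15)=1$, while $\gcd(9,15)=3$ and $3\cdot 4=12\notin\langle 9,15\rangle$), so by your own primitive-case analysis the curve is not a complete intersection for any large $m$. Hence no argument can close this gap; one must either assume $\gcd(a,b,c)=1$ or replace $k$ by $k/\gcd(a,b,c)$ in (1b)/(2b). You are in good company: the paper's proof has the same defect, exhibiting the split $\frac{j+a}{k'}\,(k')\sqcup k\,(\cdots)$ while only verifying $j+a\in\langle\cdots\rangle$, although Theorem \ref{complete-int} demands $\frac{j+a}{k'}\in\langle\cdots\rangle$. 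Note that your necessity direction does survive the reduction --- scaling up only weakens the conditions --- so the failure is confined to sufficiency.
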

\begin{proof}
If $\aa + (\jj)$ is a complete intersection sequence for $j \gg 0$, then
by Theorem \ref{main}, one of the two sets of conditions are
satisfied. We now prove the converse. First assume that (1a) and
(1b) are true. Let $\alpha = k - (\beta + \gamma)$. 
Using (1a), we can write $j+b = \alpha
\left(\frac{j}{k}\right) + \beta \left(\frac{j+a}{k}\right) + \gamma
\left(\frac{j+c}{k}\right).$ Note that $\gcd\left(\frac{a}{k},
\frac{c}{k}\right) = 1$. Therefore by Theorem \ref{complete-int1},
we get that $\left(\frac{j}{k}, \frac{j+b}{k},
\frac{j+c}{k}\right)$ is a complete intersection if $j \gg 0$ and
$\frac{j}{k} = \frac{c}{k} m$ for some $m$. Therefore if $j \gg 0$
and $j = cm$, then $\left(\frac{j}{k}, \frac{j+b}{k},
\frac{j+c}{k}\right)$ is a complete intersection. Let $k' =
\gcd(a,b,c) = \gcd(k, a)$. Then we can write
$$
\aa + (\jj) = \frac{j+a}{k'} (k') \sqcup k\left(\frac{j}{k}, 
\frac{j+b}{k},  \frac{j+c}{k}\right)
$$
with $k' \mid k$, $k' \neq k$, $\gcd\left(\frac{j+a}{k'}, k\right) =
1$ and $\left(\frac{j}{k}, \frac{j+a+b}{k},  \frac{j+a+b+c}{k}\right)$
a complete intersection. Therefore $\aa + (j)$ is a complete
intersection. If we assume the second set of condition, then the proof
can be obtained by interchanging the role of $a$ and $b$.
\end{proof}

\section{Examples}
We conclude the article by giving some examples. In the first example,
we show the periodicity.
\begin{example}
Let $\aa = (11,16,28)$. Let $j = 28m$ for some $m > 1$. Then it can be
seen that $28m + 11 = 2 (7m) + (7m + 4) + (7m + 7)$ and that $(7m,
7m+4, 7m+7)$ is a complete intersection (here we need $m > 1$). 
Therefore $(28m, 28m+11, 28m+16, 28m+28)$ is a complete intersection
sequence. 
\end{example}
The next example shows that $CI(\aa)$ could be non-empty and finite.
\begin{example}\label{ex1}
Let $\aa = (3,8,20)$. For $j = 28$, we have $\aa + (\jj) =
(28,31,36,48)$ and it can be seen that $\aa + (\jj) = 31 ~(1) \sqcup 4
(7,9,12)$ is a complete intersection split. Therefore $\aa + (j)$ is a
complete intersection. Suppose $\aa + (\jj)$ is a complete intersection
for $j \gg 0$. Since $\gcd(3, 20) = 1$, the only possible split for $j
\gg 0$ is of the form
$$
\aa + (\jj) = \frac{j+3}{k'} ~(k') \sqcup 4\left(\frac{j}{4},
\frac{j+8}{4}, \frac{j+20}{4}\right),
$$
with $\alpha + \beta + \gamma = 4$. This gives us the equation,
$$
12 = 8 \beta + 20 \gamma.
$$
Since this does not have a non-negative integer solution, we arrive at
a contradiction. Therefore, $CI(\aa)$ is finite. This examples also
shows that taking $j > a_n$ is not enough.
\end{example}

The next example shows that converse of Corollary \ref{cor1} is not
always true even for $j > a_n^2$ and $j = a_nm$.
\begin{example}\label{ex2}
Let $\aa = (8,17,18)$. Then $\aa$ is a complete intersection sequence.
For $j \gg 0$ and $j = 18m$, the only possibility of a complete
intersection split is of the form
$$
\aa + (\jj) = j+17 ~(1) \sqcup 2\left(\frac{j}{2}, \frac{j+8}{2},
\frac{j+18}{2}\right)
$$
such that $j+17 = \alpha \frac{j}{2} + \beta \frac{j+8}{2} + \gamma
\frac{j+18}{2}$ with $\alpha + \beta + \gamma = 2$. Therefore, $17 =
4\beta + 9 \gamma = 17$ and $\beta + \gamma \leq 2$. Since this does
not have a non-negative integer solution, this does not occur.
Therefore, $\aa + (\jj)$ can not be a complete intersection for $j >
18^2$.
\end{example}

\vskip 2mm
\noindent
\textbf{Acknowledgement:} The work was done during the first author's visit to
University of Missouri. He was funded by the Department of Science and Technology, Government of India. He would like to express sincere thanks to the funding agency and also to the Department of Mathematics at University of Missouri for the great hospitality provided to him.

\bibliographystyle{amsplain}

\end{document}